\def\margin_comment#1{\marginpar{\sffamily{\tiny #1\par}\normalfont}}
\newcommand*{\mat}{\mathbf}
\newtheorem{thm}{Theorem}[section]
\numberwithin{equation}{section} %% Comment out for sequentially-numbered
\numberwithin{figure}{section} %% Comment out for sequentially-numbered
\theoremstyle{plain}
\newtheorem*{thm*}{Theorem}
\theoremstyle{definition}
\theoremstyle{plain}
\newtheorem{thm_A}{Theorem}
\newtheorem*{defn*}{Definition}
\theoremstyle{plain}
\theoremstyle{plain} %%Delete [thm] to re-start numbering
\theoremstyle{plain}
\newtheorem{prop}[thm]{Proposition} %%Delete [thm] to re-start numbering
\theoremstyle{remark}
\newtheorem{ex}[thm]{Example}
\theoremstyle{remark}
\newtheorem{rem}[thm]{Remark}
\theoremstyle{plain}
\theoremstyle{plain}
\theoremstyle{plain}
\newtheorem{lem}[thm]{Lemma} %%Delete [thm] to re-start numbering
\theoremstyle{definition}
\newtheorem{defn}[thm]{Definition}
\newtheorem*{acknowledgment*}{Acknowledgment}
\theoremstyle{plain}
\newtheorem*{ex*}{Example}
\theoremstyle{plain}
\begin{document}
%\doublespacing
\title[When the Tracy-Singh product of matrices represents an  operation ]{When the Tracy-Singh product of matrices represents a certain operation on linear operators}
\author{Fabienne Chouraqui}
%\date{\today}
\begin{abstract}
	Given two  linear transformations, with representing matrices $A$ and $B$ with respect to some bases, it is not clear, in general, whether  the Tracy-Singh product of  the matrices $A$ and $B$ corresponds to a particular operation  on the linear  transformations.  Nevertheless, it is not hard to show that in the  particular case that each matrix  is  a square  matrix of order of the form $n^2$, $n>1$,  and is partitioned into $n^2$  square blocks of order $n$, then their  Tracy-Singh product,  $A \boxtimes B$,   is similar to  $A \otimes B$,  and the change of basis matrix is a permutation matrix. 
In this note, we prove that in the special case of  linear operators  induced from set-theoretic solutions  of the Yang-Baxter equation, the  Tracy-Singh product  of their representing matrices is the representing matrix of the linear operator  obtained from the direct product of the set-theoretic solutions.
\end{abstract}

\maketitle
 Keywords: Tracy-Singh product of matrices, the Yang-Baxter equation, Set-theoretic solutions of the  Yang-Baxter equation, representing matrices of linear operators.
%%%%%%%%%%%%%%%%%%%%%%%%%%%%%%%%%%%%%%%%%%%%%%%
%%%%%%%%%%%%%%%%%%%%%%%%%%%%%%%%%%%%%%%%%%%%%%%
%%%%%%%%%%%%%%%%%%%%%%%%%%%%%%%%%%%%%%%%%%%%%%%%%%%%%%%%%%%%%%
\section*{Introduction}
The Kronecker product  (or tensor product) of matrices is a fundamental concept in linear algebra, which arises in various areas of mathematics, physics, and engineering. Given two arbitrary matrices $A$ and $B$, of order $m\times n$ and $p \times q$ respectively, their Kronecker product, $A \otimes B\,=\,\begin{pmatrix}
a_{11}B &a_{12}B&...&a_{1n}B\\
...\\
a_{m1}B &a_{m2}B&...&a_{mn}B\\
\end{pmatrix}$,   a block  matrix of order $mp \times nq$. The Kronecker product  is associative and distributive (over $+$), but not commutative. If $A$ and $B$ are both square and invertible matrices, then $A \otimes B$ is also invertible and $(A \otimes B)^{-1}=A^{-1}\otimes B^{-1}$. There are several other properties of the Kronecker product which are well-known. The Tracy-Singh product of matrices is  a generalisation of the Kronecker product of matrices, called sometimes the block Kronecker product, as it requires a partition of the matrices $A$ and $B$ into blocks.  In case $A$ and $B$ are not  partitioned into blocks, their Tracy-Singh product, $A \boxtimes B$, is equal to $A \otimes B$. The Tracy-Singh product of matrices was introduced by Tracy and Singh in 1972 and has since gained significant attention in both mathematics and physics. The  Tracy-Singh product shares several properties with the Kronecker product. Like the Kronecker product, the Tracy-Singh product  is associative and distributive, but not commutative. If $A$ and $B$ are both square and invertible matrices, then $A \boxtimes B$ is also invertible and $(A \boxtimes B)^{-1}=A^{-1}\boxtimes B^{-1}$.

Despite the recent surge in interest in the Tracy-Singh product,  it is still not much understood. Moreover, a general  interpretation of the Tracy-Singh product is not known. In this note, we give an  interpretation  of the Tracy-Singh product  for a particular class of   linear operators. Indeed, in the special case of  linear operators  induced from set-theoretic solutions  of the Yang-Baxter equation, the  Tracy-Singh product  of their representing matrices is the representing matrix of the linear operator  obtained from the direct product of the set-theoretic solutions.

The Yang-Baxter equation (YBE) is an equation in mathematical physics and it lies in the  foundation of  the theory of quantum groups. One of the fundamental problems is to find all the solutions of this equation. In \cite{drinf}, Drinfeld suggested the study of a particular class of solutions,  derived from the so-called set-theoretic solutions.  A  set-theoretic solution of the Yang-Baxter equation or st-YBE is a pair $(X,r)$, where $X$ is a set and 
\[r: X \times X \rightarrow X \times X\,,\;\;\; r(x,y)=(\sigma_x(y),\gamma_y(x))\]
is a bijective map satisfying $r^{12}r^{23}r^{12}=r^{23}r^{12}r^{23}$, where $r^{12}=r \times Id_X$ and  $r^{23}=Id_X\times r$. A st-YBE  $(X,r)$ is said to be non-degenerate if, for every $x \in X$, the maps $\sigma_x,\gamma_x$ are bijections of $X$ and it  is said to be involutive if $r^2=Id_{X\times X}$.   Non-degenerate and involutive st-YBE give rise to solutions of the  Yang Baxter equation.  Indeed, by defining  $V$ to be  the  real vector space spanned by  a set in bijection with $X$,  and $c:V \otimes V \rightarrow V \otimes V$  to be the  linear operator induced by  $r $, then   $c$ is  a linear operator satisfying the equality  $c^{12}c^{23}c^{12}=c^{23}c^{12}c^{23}$ in $V \otimes V \otimes V$, that is $c$  is a solution of the  Yang-Baxter equation, called   the linearisation of $r$.  By composing $r$ with $\tau$, 
where $\tau$ is the flip map $\tau(x,y)=(y,x)$,  the induced linear operator, $R$,  is  a linear operator satisfying the equality  $R^{12}R^{13}R^{23}=R^{23}R^{13}R^{12}$ in $V \otimes V \otimes V$, that is $R$  is a solution of the quantum Yang-Baxter equation. We are now able to state:

\begin{thm_A}\label{thmA}
	Let $c:V\otimes V\rightarrow V \otimes V$ and $d:V'\otimes V'\rightarrow V' \otimes V'$ be  the linearisations of    the non-degenerate and involutive st-YBE $(X,r)$ and $(Y,s)$  respectively.  	Let  $e:W\otimes W\rightarrow W \otimes W$  be  the linearisation of   $(Z,t)$,   the direct product of  $(X,r)$ and $(Y,s)$, with $Z =X \times Y$. 
	Then $c \boxtimes d$,  the Tracy-Singh product of  $c$ and $d$,   is the representing matrix of  the linear automorphism $e$  with respect to the basis $Z\otimes Z$.
	\end{thm_A}
The paper is organized as follows. In Section $1$, we give some preliminaries on matrix products and their properties.  In Section $2$, we give some preliminaries on  solutions of the Yang-Baxter equation and on st-YBE.   In Section $3$,   we   prove Theorem \ref{thmA}. 
 \begin{acknowledgment*}
 	The author is very grateful to Gigel Militaru for  some instructive remarks on the direct product of  st-YBE.
 \end{acknowledgment*}
 
 The author confirms that the data supporting the findings of this study are available within the article.
 %%%%%%%%%%%%%%%%%%%%%%%%%%%%%%%%%%%%%%%%%%%%%%%%%%%
 \section{Preliminaries on some products of matrices and their properties} 
 %%%%%%%%%%%%%%%%%%%%%%%%%%%%%%%%%%%%%%%%%%%%%%%%%%
 We refer to \cite{hyland},  \cite{liu}, \cite{commut}, \cite{tracy}, \cite{tracy-jina}, \cite{zhour}  for more details. 	We also refer to  Section $2$  in \cite{chou-tracy} for more preliminaries and examples.
 
 \begin{defn}
 	Let $A=(a_{ij})$ be  of order $m \times n$ and $B=(b_{kl})$ of order $p \times q$. Let $A=(A_{ij})$ be partitioned with $A_{ij}$ of order $m_i \times n_j$ as the $ij$-th block submatrix and let $B=(B_{kl})$ be partitioned with $B_{kl}$ of order $p_k \times q_l$ as the $kl$-th block submatrix ($\sum m_i=m,\,\sum n_j=n,\,\sum p_k=p,\,\sum q_l=q$).
 \begin{enumerate}
 	\item  The Kronecker (or tensor) product:
 	\begin{equation*}
 	A\,\otimes \, B\,=\,(a_{ij}B)_{ij}
 	\end{equation*}
 	The matrix $A\,\otimes \, B$ is of order $mp \times nq$ and the block $a_{ij}B$ is order $p \times q$.
 	\item  The Tracy-Singh (or block Kronecker)  product:
 	\begin{equation*}
 	A\,\boxtimes \, B\,=\,((A_{ij} \otimes B_{kl})_{kl})_{ij}
 	\end{equation*}
 	The matrix $A\,\boxtimes \, B$ is of order $mp \times nq$ and the block $A_{ij} \otimes B_{kl}$ is order $m_ip \times n_jq$.
 \end{enumerate}
 For non-partitioned matrices,  $A\,\boxtimes \, B\,=\, A\,\otimes \, B$.
\end{defn}
\begin{ex}\label{ex-tracyproduct-2-3}
	The Tracy-Singh product of $A$ and $B$ with those partitions is:
\begin{gather} 
\nonumber A \boxtimes B=\left( \begin{tabular}{c|c}
$A_{11}$  &	 	$A_{12}$  \\
\hline
$A_{21}$ & $A_{22}$ \\
\end{tabular}\right) \boxtimes
\left( \begin{tabular}{c|c|c}
$B_{11}$ &	$B_{12}$  &	$B_{13}$\\
\hline
$B_{21}$ &	$B_{22}$  &	$B_{23}$\\
\hline
$B_{31}$ &	$B_{32}$  &	$B_{33}$\\
\end{tabular}\right)=\\
\left( \begin{tabular}{c|c|c||c|c|c}\label{eqn-tracy-blocks}
$A_{11}\otimes B_{11}$  &	 $A_{11} \otimes B_{12}$  &	$A_{11}\otimes B_{13}$ &	$A_{12}\otimes B_{11}$  &	$A_{12}\otimes B_{12}$ &	$A_{12}\otimes B_{13}$\\
\hline
$A_{11}\otimes B_{21} $&	$A_{11}\otimes B_{22}$ &	$A_{11}\otimes B_{23}$ &	$A_{12}\otimes B_{21}$ &	$A_{12}\otimes B_{22}$ &	$A_{12}\otimes B_{23}$ \\
\hline
$A_{11}\otimes B_{31}$ &	$A_{11}\otimes B_{32}$ &	$A_{11}\otimes B_{33} $&	$A_{12}\otimes B_{31} $&	$A_{12}\otimes B_{32}$ &	$A_{12}\otimes B_{33}$ \\
\hline
\hline
$A_{21}\otimes B_{11}$ &	$A_{21}\otimes B_{12}$ &	$A_{21}\otimes B_{13} $&	$A_{22}\otimes B_{11}$ &	$A_{22}\otimes B_{12} $&	$A_{22}\otimes B_{13}$ \\
\hline
$A_{21}\otimes B_{21}$ &	$A_{21}\otimes B_{22}$ &	$A_{21}\otimes B_{23}$ &	$A_{22}\otimes B_{21} $&	$A_{22}\otimes B_{22}$ &	$A_{22}\otimes B_{23}$ \\
\hline
$A_{21}\otimes B_{31}$ &	$A_{21}\otimes B_{32} $&	$A_{21}\otimes B_{33} $&	$A_{22}\otimes B_{31}$ &	$A_{22}\otimes B_{32} $&	$A_{22}\otimes B_{33}$ \\
\end{tabular}\right)
\end{gather} 
\end{ex}
 In the following Theorems, we list some important properties of the Tracy-Singh product.
 \begin{thm}\cite{tracy}\label{thm-tracy}
 	Let $A$, $B$, $C$, and $D$  be matrices. Then
 	\begin{enumerate}[(i)]
 		\item $A \boxtimes B$ and $B \boxtimes A$ exist for any  matrices $A$ and $B$.
 		\item $A \boxtimes B \neq B \boxtimes A$ in general.
 		\item  $(A \boxtimes B)\,\boxtimes C= \,A\boxtimes \,(B \boxtimes C)$.
 		\item $(A+ B)\, \boxtimes\,(C +D)=\, A \boxtimes C+A \boxtimes D +B \boxtimes C+B \boxtimes D$,  if $A+B$ and $C+D$ exist.
 		\item $(A \boxtimes B)\,(C \boxtimes D)=\, AC \boxtimes BD$,  if $AC$ and $BD$ exist.
 		\item $(cA) \boxtimes B\,=\,c (A\boxtimes B \,=\,A \boxtimes (cB)$.
 		\item  $(A \boxtimes B)^{-1}\,=  A^{-1} \boxtimes B^{-1}$, if $A$ and $B$ are invertible.
 		\item  $(A \boxtimes B)^{t}\,=  A^{t} \boxtimes B^{t}$.
 		\item 	$\mat{I}_{n} \boxtimes \mat{I}_{m}\,=\,\mat{I}_{nm}$ for identity partitioned matrices.
 	\end{enumerate}
 \end{thm}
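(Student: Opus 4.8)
The plan is to verify the nine items directly from the defining formula $A\boxtimes B=((A_{ij}\otimes B_{kl})_{kl})_{ij}$, reducing each to a known property of the Kronecker product applied block-by-block. The organising observation is that, writing $[A\boxtimes B]_{(i,k),(j,l)}$ for the sub-block in block-row $(i,k)$ and block-column $(j,l)$ (ordered lexicographically, outer index most significant), one has $[A\boxtimes B]_{(i,k),(j,l)}=A_{ij}\otimes B_{kl}$. Every identity below is then checked on these sub-blocks.

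First I would dispatch the routine items. Item (i) is immediate: $A_{ij}\otimes B_{kl}$ is defined for arbitrary matrices, so each sub-block, and hence the whole product, exists without any partition-compatibility requirement. Item (ii) needs only a single $1\times1$-block example, since the ordinary Kronecker product already fails to commute. Items (vi), (viii), and (ix) are read off sub-block by sub-block from the Kronecker identities $(cA_{ij})\otimes B_{kl}=c\,(A_{ij}\otimes B_{kl})=A_{ij}\otimes(cB_{kl})$, $(A_{ij}\otimes B_{kl})^{t}=A_{ij}^{t}\otimes B_{kl}^{t}$ (together with the fact that transposing a block matrix transposes its grid of block indices, so that $(A^{t})_{ij}=(A_{ji})^{t}$), and $I\otimes I=I$. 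Item (iv) uses bilinearity: $[(A+B)\boxtimes(C+D)]_{(i,k),(j,l)}=(A_{ij}+B_{ij})\otimes(C_{kl}+D_{kl})$, which expands into the four stated terms by distributivity of $\otimes$ over addition.

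The substantive step is the mixed-product rule (v), from which (vii) follows and which mirrors the associativity computation (iii). For (v) I would compute, by block matrix multiplication,
\[
[(A\boxtimes B)(C\boxtimes D)]_{(i,k),(j,l)}=\sum_{r,s}(A_{ir}\otimes B_{ks})(C_{rj}\otimes D_{sl}),
\]
then apply the Kronecker mixed-product identity $(A_{ir}\otimes B_{ks})(C_{rj}\otimes D_{sl})=(A_{ir}C_{rj})\otimes(B_{ks}D_{sl})$ to each summand, and finally use bilinearity to factor the double sum, obtaining $\big(\sum_r A_{ir}C_{rj}\big)\otimes\big(\sum_s B_{ks}D_{sl}\big)=(AC)_{ij}\otimes(BD)_{kl}$, which is the corresponding sub-block of $AC\boxtimes BD$. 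Item (vii) is then the corollary $(A\boxtimes B)(A^{-1}\boxtimes B^{-1})=AA^{-1}\boxtimes BB^{-1}=I\boxtimes I=I$, using (v) and (ix). Associativity (iii) follows from the same block expansion with the associativity of $\otimes$: the sub-block of either grouping in block-position $\big((i,k,u),(j,l,v)\big)$ equals $A_{ij}\otimes B_{kl}\otimes C_{uv}$.

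The main obstacle is the index bookkeeping in (v). The doubly-indexed block structure forces one to track four partition levels at once, and the crux is the legitimacy of splitting the single double sum $\sum_{r,s}$ into the product of two independent sums $\sum_r$ and $\sum_s$; this requires the partition-compatibility hypotheses hidden in the phrase ``if $AC$ and $BD$ exist'', namely that the column partition of $A$ matches the row partition of $C$ and the column partition of $B$ matches the row partition of $D$. I would state these compatibility conditions explicitly at the outset, since they are exactly what makes both the block multiplication and the separation of the double sum valid.
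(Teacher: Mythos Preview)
The paper does not give its own proof of this theorem: it is stated as a quotation from \cite{tracy} and no proof environment follows. Your block-by-block verification from the defining formula $[A\boxtimes B]_{(i,k),(j,l)}=A_{ij}\otimes B_{kl}$ is correct and is exactly the standard route; in particular your derivation of (v) via the Kronecker mixed-product identity and the separation of the double sum, with (vii) as its corollary through (ix), is sound, and your remark that the phrase ``if $AC$ and $BD$ exist'' encodes the block-partition compatibility needed for that separation is the right point to make explicit. So your proposal supplies what the paper simply cites.
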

In matrix theory, the commutation matrix is used for transforming the vectorized form of a matrix into the vectorized form of its transpose. 
\begin{defn}\cite{commut} \label{defn-commut}
The \emph{commutation matrix} $K_{mn}$  is the  matrix defined by:
\begin{equation*}
K_{mn}=\, \sum\limits_{i=1}^{i=m}\,\sum\limits_{j=1}^{j=n}E_{ij}\otimes E^t_{ij}
\end{equation*}
where $E_{ij}$ is a matrix of order $ m \times n$ with a $1$ in its $ij$-th position and zeroes elsewhere.\\
In words, $K_{mn}$  is the square matrix of order $mn$, partitioned into $ mn$ blocks of order $ n \times m$ such that the $ij$-th block has a 1 in its $ ji$-th position and $0$  elsewhere.  It holds that $K_{nm}\,=\,	K_{mn}^{-1}$.
\end{defn}
For example (see \cite[p383]{commut}), $K_{23}=
\left(\begin{array}{c:c|c}
E_{11}
& E_{21}
& E_{31} \\
\hdashline
E_{12}
& E_{22}
& E_{32} \\
\end{array}\right)\;\;=\;\;
\left(\begin{tabular}{cc:cc:cc}
1

& 0 & 0
& 0 &0&0 \\
0

& 0 & 1
& 0  & 0&0 \\
0

& 0  & 0
& 0 & 1&0 \\
\hdashline
0

& 1 & 0
& 0  & 0&0 \\

0

& 0 & 0
& 1  & 0&0 \\
0

& 0  & 0
& 0 & 0&1 \\
\end{tabular}\right)$

\begin{prop}\label{prop-prop-box-cd}
Let $A$ and $B$ be square matrices of order $n^2$ and $m^2$  respectively,  such that $A$  has a partition into $n^2$ square blocks $B_{ij}$ of order $n$ and $B$ has  a    partition into $m^2$ square blocks $B'_{kl}$ of order $m$.  Let $A \boxtimes B$ denote the Tracy-Singh product of $A$ and $B$ with these partitions. Then
\begin{enumerate}[(i)]
\item $B \otimes A\,=\, K_{m^2n^2}\,\cdot\,(A \otimes B)\,\cdot\,K_{n^2m^2}$, where $K_{m^2n^2}\,=\,(K_{n^2m^2})^{-1}$.
\item 	
$A \boxtimes B\,=\, 	(\mat{I}_{n}\,\otimes K_{mn} \,\otimes \mat{I}_{m})\,	\cdot\, (A\otimes B)\, \cdot\, (\mat{I}_{n}\,\otimes K_{nm}\,\otimes  \mat{I}_{m})$

\item 
$B\boxtimes A\,=\, 	P \cdot\, (A\boxtimes B)\,\cdot\,P^{-1}$, where 
$P=\, (\mat{I}_{m}\,\otimes K_{nm} \,\otimes \mat{I}_{n})\,	\cdot\, K_{m^2n^2}\,\cdot\, (\mat{I}_{n}\,\otimes K_{nm} \,\otimes \mat{I}_{m})$
\end{enumerate}
\end{prop}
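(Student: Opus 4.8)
The plan is to derive all three identities as specializations of Theorem~\ref{thm-tracy-K}, the only care needed being to reconcile two clashing uses of the letters $m$ and $n$: in Theorem~\ref{thm-tracy-K} the symbols $n,s$ (resp.\ $m,t$) are the full row/column dimensions of $A$ (resp.\ $B$) and $p,q$ (resp.\ $u,v$) are the numbers of block-rows/block-columns, whereas in Proposition~\ref{prop-prop-box-cd} the full dimensions are $n^2$ and $m^2$ while the block dimensions are $n$ and $m$. Concretely I would invoke Theorem~\ref{thm-tracy-K} with $A$ of order $n^2\times n^2$ partitioned into $p=q=n$ block-rows and block-columns, each block of order $n'=s'=n$, and with $B$ of order $m^2\times m^2$ partitioned into $u=v=m$ block-rows and block-columns, each block of order $m'=t'=m$. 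Under this partition all blocks of $A$ share one common order, and likewise all blocks of $B$, so the hypotheses of Theorem~\ref{thm-tracy-K}(ii) hold; checking this common-order condition (immediate from the hypothesis of the Proposition) is the one point where one should not be careless.

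Part (i) is then just Theorem~\ref{thm-tracy-K}(i) with the replacements ``$n$''$\mapsto n^2$, ``$s$''$\mapsto n^2$, ``$m$''$\mapsto m^2$, ``$t$''$\mapsto m^2$, which gives $B\otimes A=K_{m^2n^2}(A\otimes B)K_{n^2m^2}$; the stated relation $K_{m^2n^2}=(K_{n^2m^2})^{-1}$ is precisely the identity $K_{ab}=K_{ba}^{-1}$ recorded in Definition~\ref{defn-commut}. Part (ii) is the first bullet of Theorem~\ref{thm-tracy-K}(ii) under the same replacements, using $K_{un'}=K_{mn}$ and $K_{s'v}=K_{nm}$; no extra computation is required, only transcription.

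The only genuine step occurs in part (iii). The second bullet of Theorem~\ref{thm-tracy-K}(ii), after substitution (noting that the ``$K_{mn}$'' appearing in the formula for $P$ has indices equal to the full row dimensions of $B$ and $A$, hence is $K_{m^2n^2}$, and similarly ``$K_{st}$'' in $Q$ is $K_{n^2m^2}$), gives $B\boxtimes A=P\cdot(A\boxtimes B)\cdot Q$ with $P=(\mat{I}_m\otimes K_{nm}\otimes \mat{I}_n)\cdot K_{m^2n^2}\cdot(\mat{I}_n\otimes K_{nm}\otimes\mat{I}_m)$ and $Q=(\mat{I}_n\otimes K_{mn}\otimes \mat{I}_m)\cdot K_{n^2m^2}\cdot(\mat{I}_m\otimes K_{mn}\otimes\mat{I}_n)$, so it remains to verify $Q=P^{-1}$. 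I would do this directly rather than by multiplying the matrices out: invert the three-factor product defining $P$ by reversing the order and inverting each factor, using $(\mat{I}_a\otimes K_{nm}\otimes \mat{I}_b)^{-1}=\mat{I}_a\otimes K_{mn}\otimes \mat{I}_b$ (from $(X\otimes Y\otimes W)^{-1}=X^{-1}\otimes Y^{-1}\otimes W^{-1}$, $\mat{I}^{-1}=\mat{I}$, and $K_{nm}^{-1}=K_{mn}$) together with $(K_{m^2n^2})^{-1}=K_{n^2m^2}$; the result is exactly the product defining $Q$. (Alternatively, every factor is a permutation matrix, hence orthogonal, so $P^{-1}=P^{t}$, and one matches $P^t$ with $Q$ via $K_{ab}^{t}=K_{ba}$.) Finally, the ``additionally'' assertions about similarity are read off for free: (ii) writes $A\boxtimes B=S(A\otimes B)S^{-1}$ with $S=\mat{I}_n\otimes K_{mn}\otimes\mat{I}_m$, (i) writes $B\otimes A=K_{m^2n^2}(A\otimes B)(K_{m^2n^2})^{-1}$, and (iii) writes $B\boxtimes A=P(A\boxtimes B)P^{-1}$, all three conjugating matrices being permutation matrices.

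I do not expect a real obstacle: the mathematical content is entirely contained in Theorem~\ref{thm-tracy-K}, and the remaining work is bookkeeping — keeping the two meanings of $m$ and $n$ separate, and confirming the single inversion identity $Q=P^{-1}$.
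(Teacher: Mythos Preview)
Your proposal is correct and matches the paper's approach exactly: the paper states just before Proposition~\ref{prop-prop-box-cd} that it ``is a direct application of Theorem~\ref{thm-tracy-K}'' and gives no further proof, noting only afterward that $(\mat{I}_{n}\otimes K_{mn}\otimes\mat{I}_{m})^{-1}=\mat{I}_{n}\otimes K_{nm}\otimes\mat{I}_{m}$. Your write-up simply makes the substitutions explicit and supplies the verification $Q=P^{-1}$ that the paper leaves implicit.
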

Note that, 	from the properties of the Kronecker product and the commutation matrix, $ (\mat{I}_{n}\,\otimes K_{mn} \,\otimes \mat{I}_{m})^{-1}\,=\, (\mat{I}_{n}\,\otimes K_{nm}\,\otimes  \mat{I}_{m})$.
For more general formulas, we refer to \cite{neud},\cite{tracy-jina}, \cite{commut},  and also \cite{chou-tracy}.
%%%%%%%%%%%%%%%%%%%%%%%%%%%%%%%%%%%%%%%%%%%%%%%%%%%
 \section{Preliminaries on  solutions of the   Yang-Baxter equation  (YBE) }
 %%%%%%%%%%%%%%%%%%%%%%%%%%%%%%%%%%%%%%%%%%%%%%%
 \subsection{Definition and properties of solutions of the YBE}
 %%%%%%%%%%%%%%%%%%%%%%%%%%%%%%%%%%%%%%%%%%%%%%%
 We use the terminology from \cite[Ch.VIII]{kassel}.  We refer the reader to  \cite{kassel} for more details.
 \begin{defn}
 	Let $V$ be a vector space over a field $k$. A linear automorphism $c$ of  $V \otimes V$ is said to be an $R$-matrix if it is a solution of the Yang-Baxter equation (YBE)
 	\begin{equation}\label{eqn-ybe}
 	(c \otimes Id_V)(Id_V \otimes c )(c \otimes Id_V)\,=\,(Id_V \otimes c)(c \otimes Id_V)(Id_V\otimes c)
 	\end{equation}
 	that holds in the automorphism group of  $V \otimes V \otimes V$.
 	It is also written as $c^{12}c^{23}c^{12}=c^{23}c^{12}c^{23}$.
 \end{defn}
 Let $\{e_i\}_{i=1}^{i=n}$ be a basis of the finite dimensional vector space $V$.  An automorphism $c$ of   $V \otimes V$ is defined by the family $(c_{ij}^{kl})_{i,j,k,l}$ of scalars determined by
 
 \begin{equation}\label{eqn-defn-c}
 c(e_i\otimes e_j)\,=\,\sum\limits_{k,l} \,c_{ij}^{kl}\,e_k \otimes e_l
 \end{equation}
 Then $c$ is an $R$-matrix if and only if  its representing  matrix  with respect to  the basis  $\{e_i\otimes e_j\mid 1\leq i,j\leq n\}$,   which is also denoted by $c$, satisfies
 \begin{equation}\label{eqn-ybe-matrix}
 (c \otimes  \mat{I}_{n})\,(\mat{I}_{n}\otimes  c)\,(c \otimes  \mat{I}_{n})\,=\,(\mat{I}_{n}\otimes  c)\,(c \otimes  \mat{I}_{n})\,(\mat{I}_{n}\otimes  c)
 \end{equation}
 \begin{ex}\label{ex-solutions}(\cite{kaufman}, \cite{dye} and \cite{kassel})
 	Let $c,\,d:V \otimes V \rightarrow V \otimes V$ be  $R$-matrices,  $\operatorname{dim}(V)=2$:
 	\[c=	\begin{pmatrix}
 	\frac{1}{\sqrt{2}}& 0 & 0 & 	\frac{1}{\sqrt{2}}\\
 	0 &	\frac{1}{\sqrt{2}} & -	\frac{1}{\sqrt{2}} & 0 \\
 	0 &	\frac{1}{\sqrt{2}} & 	\frac{1}{\sqrt{2}} &0 \\
 	-	\frac{1}{\sqrt{2}} &0 & 0 &	\frac{1}{\sqrt{2}} \\
 	\end{pmatrix}\;\;\;  \textrm{and}\;\;\; 
 	d=\begin{pmatrix}
 	2& 0 & 0 & 	0\\
 	0 &0 & 1 & 0 \\
 	0 &	1 & 1.5&0 \\
 	0&0 & 0 &	2\\
 	\end{pmatrix}\]
 	From the matrices $c$ and $d$, we read  $c(e_1 \otimes e_2)=\,\frac{1}{\sqrt{2}} \,e_1 \otimes e_2\,+\,\frac{1}{\sqrt{2}} \,e_2 \otimes e_1$ and $d(e_1 \otimes e_2)=\,e_2 \otimes e_1$.
 		As a convention, we  always consider the  basis   $\{e_i \otimes e_j \,\mid \,1\leq i,j \leq  n \}$ of $V\otimes V$ ordered lexicographically, that is, as an example, for $n=2$, the ordered basis of $V\otimes V$  is 
 		$\{e_1\otimes e_1 ,\,e_1\otimes e_2,\,e_2\otimes e_1,\,e_2\otimes e_2 \}$.
 	\end{ex}
 
 \begin{defn}\label{def-iso}
 	Let $c: V \otimes V \rightarrow V \otimes V$ and $c': V' \otimes V' \rightarrow V' \otimes V'$
 	be solutions of the YBE, where $V$ and $V'$ are vector spaces over the same field $k$, with $\operatorname{dim}(V)=\operatorname{dim}(V')=n$. The solutions $c$ and  $c'$ are \emph{isomorphic} if there exists a linear isomorphism $\mu:V \rightarrow V'$ such that 
 	$c'\,(\mu \otimes\mu)\,=\,(\mu \otimes\mu)\,c$.
 \end{defn}
 There  is a  way to generate new $R$-matrices from old ones. Indeed,  if $c \in \operatorname{Aut}(V \otimes V)$ is an $R$-matrix, then so are $\lambda c$, $c^{-1}$ and $\tau \circ c \circ \tau$, where $\lambda$ is any non-zero scalar and $\tau: V \otimes V$ is the flip map ($\tau(v_1 \otimes v_2)=v_2\otimes v_1$) \cite[p.168]{kassel}. 
 \begin{rem}\label{rem-similar-matrices}
 	If  $c \in \operatorname{Aut}(V \otimes V)$ is an $R$-matrix,  then $P^{-1}cP$ is not necessarily an  $R$-matrix,  where $P$ is any invertible matrix.  As an example, if  we  conjugate the $R$-matrix  $c$ (or $d$) from Example \ref{ex-solutions} by  $P$, where $P$ is  the permutation matrix  corresponding to the cycle  $(1,2,3,4)$, then the  matrix  $P^{-1}cP$  (or $P^{-1}dP$) is not  an  $R$-matrix. But,  if  we  conjugate the $R$-matrix  $c$ (or $d$) from Example \ref{ex-solutions} by  $P$, where $P$ is  the permutation matrix  corresponding to $(1,4)(2,3)$, then the  matrix  $P^{-1}cP$  (or $P^{-1}dP$) is   an  $R$-matrix. The explanation  is the following: in both  cases, $P^{-1}cP$ and $c$ are similar matrices and $P$ is the matrix  describing a change of basis  of  $V \otimes V$, but   only in the  second  case, the change of basis of  $V \otimes V$  corresponds to a change of basis of $V$ ( the new basis  of $V$ is  $\{v_1=e_2\,, v_2=e_1\}$). 
 \end{rem}
 
 %%%%%%%%%%%%%%%%%%%%%%%%%%%%%%%%%%%%%%%%%%%%%%%
 \subsection{Definition and properties of set-theoretic solutions of the YBE}
 %%%%%%%%%%%%%%%%%%%%%%%%%%%%%%%%%%%%%%%%%%%%%%%
 In \cite{drinf}, Drinfeld suggested the study of a particular class of solutions,  derived from the so-called set-theoretic solutions. The study of these solutions was initiated in \cite{etingof}. We refer also to  \cite{gateva_van,gateva_new}  and \cite{jespers_book}. Let $X$ be a non-empty set. Let $r: X \times X \rightarrow X \times X$  be a map and write $r(x,y)=(\sigma_{x}(y),\gamma_{y}(x))$,  where $\sigma_x, \gamma_x:X\to X$ are functions  for all  $x,y \in X$.   The pair $(X,r)$ is  \emph{braided} if $r^{12}r^{23}r^{12}=r^{23}r^{12}r^{23}$, where the map $r^{ii+1}$ means $r$ acting on the $i$-th and $(i+1)$-th components of $X^3$.  In this case, we  call  $(X,r)$  \emph{a set-theoretic solution of the  Yang-Baxter equation or st-YBE}, and whenever $X$ is finite, we  call  $(X,r)$  \emph{a finite st-YBE}.  The pair $(X,r)$ is \emph{non-degenerate} if for every  $x\in X$,  $\sigma_{x}$ and $\gamma_{x}$  are bijective and it   is  \emph{involutive} if $r\circ r = Id_{X^2}$. If $(X,r)$ is a non-degenerate involutive st-YBE, then $r(x,y)$ can be described as  $r(x,y)=(\sigma_{x}(y),\gamma_{y}(x))=(\sigma_{x}(y),\,\sigma^{-1}_{\sigma_{x}(y)}(x))$.  A st-YBE  $(X,r)$ is \emph{square-free},  if  for every $x \in X$, $r(x,x)=(x,x)$. A st-YBE  $(X,r)$ is \emph{trivial} if $\sigma_{x}=\gamma_{x}=Id_X$, for every  $x \in X$. 
 \begin{lem}\cite{etingof} \label{lem-formules-invol+braided}
 	\begin{enumerate}[(i)]
 		\item  $(X,r)$ is involutive  if  and only if  for every  $x,y \in X$:
 		\begin{gather}
 		\sigma_{\sigma_x(y)}\gamma_{y}(x)=x \label{eqn-inv1}\\
 		\gamma_{\gamma_y(x)}\sigma_x(y)=y \label{eqn-inv2}
 		\end{gather} 
 		\item   $(X,r)$ is  braided if  and only if, 	for  every $x,y,z \in X$, the following  holds:
 		\begin{gather}
 		\sigma_x\sigma_y=\sigma_{\sigma_x(y)}\sigma_{\gamma_y(x)} \label{eqn-braided-sigma}\\  \gamma_y\gamma_x=\gamma_{\gamma_y(x)}\gamma_{\sigma_x(y)} \label{eqn-braided-gamma}\\\gamma_{\sigma_{\gamma_y(x)}(z)}(\sigma_x(y))=\sigma_{\gamma_{\sigma_y(z)}(x)}(\gamma_z(y)) \label{eqn-braided-old}
 		\end{gather}
 	\end{enumerate}
 \end{lem}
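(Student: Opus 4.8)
The plan is to prove both equivalences by direct computation: in each case I would expand the relevant composition of $r$ in terms of the functions $\sigma_x$ and $\gamma_x$ and then compare the two sides coordinate by coordinate. Nothing beyond the defining formula $r(x,y)=(\sigma_x(y),\gamma_y(x))$ is needed.

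For part $(i)$ I would compute $r\circ r$ on an arbitrary pair $(x,y)$. Writing $u=\sigma_x(y)$ and $v=\gamma_y(x)$ so that $r(x,y)=(u,v)$, a second application of $r$ gives
\[
r(r(x,y))=r(u,v)=\bigl(\sigma_{\sigma_x(y)}(\gamma_y(x)),\,\gamma_{\gamma_y(x)}(\sigma_x(y))\bigr).
\]
Since $r\circ r=\operatorname{Id}_{X^2}$ means precisely that this pair equals $(x,y)$ for all $x,y\in X$, and since two pairs are equal iff their coordinates agree, the condition $r^2=\operatorname{Id}_{X^2}$ is equivalent to the simultaneous validity of the first-coordinate equation $\sigma_{\sigma_x(y)}\gamma_y(x)=x$ and the second-coordinate equation $\gamma_{\gamma_y(x)}\sigma_x(y)=y$, which are exactly \eqref{eqn-inv1} and \eqref{eqn-inv2}.

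For part $(ii)$ I would evaluate each of the three-fold compositions $r^{12}r^{23}r^{12}$ and $r^{23}r^{12}r^{23}$ on an arbitrary triple $(x,y,z)\in X^3$, applying the maps one factor at a time and recording the intermediate triple at each stage, where $r^{12}$ applies the rule $(a,b)\mapsto(\sigma_a(b),\gamma_b(a))$ to the first two slots and $r^{23}$ applies it to the last two. Carrying this out on the left-hand side yields
\[
r^{12}r^{23}r^{12}(x,y,z)=\bigl(\sigma_{\sigma_x(y)}\sigma_{\gamma_y(x)}(z),\ \gamma_{\sigma_{\gamma_y(x)}(z)}(\sigma_x(y)),\ \gamma_z\gamma_y(x)\bigr),
\]
and on the right-hand side
\[
r^{23}r^{12}r^{23}(x,y,z)=\bigl(\sigma_x\sigma_y(z),\ \sigma_{\gamma_{\sigma_y(z)}(x)}(\gamma_z(y)),\ \gamma_{\gamma_z(y)}\gamma_{\sigma_y(z)}(x)\bigr).
\]
The braiding relation $r^{12}r^{23}r^{12}=r^{23}r^{12}r^{23}$ holds iff these two triples coincide for every $(x,y,z)$, i.e. iff their three coordinates agree separately. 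Comparing first coordinates gives $\sigma_{\sigma_x(y)}\sigma_{\gamma_y(x)}(z)=\sigma_x\sigma_y(z)$ for all $z$, hence the function identity \eqref{eqn-braided-sigma}; comparing third coordinates gives $\gamma_z\gamma_y(x)=\gamma_{\gamma_z(y)}\gamma_{\sigma_y(z)}(x)$ for all $x$, which after the harmless renaming $(y,z)\mapsto(x,y)$ is precisely \eqref{eqn-braided-gamma}; and comparing the middle coordinates gives \eqref{eqn-braided-old} directly.

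The computations themselves are elementary, so the only real obstacle is bookkeeping: one must keep careful track of which two coordinates each $r^{ii+1}$ acts on and, crucially, substitute the already-transformed entries into the subscripts of $\sigma$ and $\gamma$ at each stage (for instance, after the first $r^{12}$ on the left-hand side the middle slot feeding into the subsequent $r^{23}$ is $\gamma_y(x)$, not $y$). Organizing the calculation as a chain of explicit intermediate triples, as indicated above, makes the nested-subscript bookkeeping transparent and guards against the easy errors of transposing an index or dropping a $\sigma$/$\gamma$; once the two final triples are in hand, the equivalence follows immediately by coordinatewise comparison.
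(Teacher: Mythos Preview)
Your argument is correct: both parts reduce, exactly as you say, to expanding the relevant compositions of $r$ coordinatewise and reading off the resulting identities, and your explicit triples for $r^{12}r^{23}r^{12}(x,y,z)$ and $r^{23}r^{12}r^{23}(x,y,z)$ are accurate. Note, however, that the paper does not supply its own proof of this lemma at all --- it is simply quoted from \cite{etingof} --- so there is nothing to compare against; your direct computation is the standard justification.
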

 Non-degenerate and involutive st-YBE are  intensively investigated and they give rise to several algebraic structures associated to them   \cite{gigel,catino4,cedo,brace,chou_art,chou_godel2,doikou,lebed1,adolfo,rump} and  others.
 \begin{ex}\label{ex-sol-2}
 	For $\mid X\mid =2$,  there are exactly two non-degenerate and involutive st-YBE. The first one, $(X,r)$,  is called  a trivial solution with $\sigma_1=\sigma_2=\gamma_1=\gamma_2=Id_X$  and the second one, $(Y,s)$,  is called a permutation solution with  $\alpha_1=\alpha_2=\beta_1=\beta_2=(1,2)$.  Their matrices with respect to the ordered standard basis  $\{e_i \otimes e_j \mid 1 \leq i,j \leq 2\}$ are respectively 	$c=\begin{pmatrix}
 	1& 0 & 0 & 	0\\
 	0 &0 & 1 & 0 \\
 	0 &	1 & 0&0 \\
 	0&0 & 0 &	1\\
 	\end{pmatrix}$ and 	$d=\begin{pmatrix}
 	0& 0 & 0 & 	1\\
 	0 &1 & 0 & 0 \\
 	0 &	0 & 1&0 \\
 	1&0 & 0 &	0\\
 	\end{pmatrix}$.
 \end{ex}
 The following lemma is well-known and trivial. We add its short  proof.
 \begin{lem}\label{lem-i-sigma-1}
	Let $(X,r)$ be a non-degenerate involutive  st-YBE. Then 
	\begin{equation}\label{eqn-i-sigma-1}
	r(x_i,\,x_{\sigma_i^{-1}(j)})\,=\,(x_j,\,x_{\sigma_j^{-1}(i)})
	\end{equation}
\end{lem}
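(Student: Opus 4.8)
The plan is to unwind the definition of $r$ together with the explicit form its second component takes in the non-degenerate involutive case. Recall from the discussion preceding Lemma~\ref{lem-formules-invol+braided} that when $(X,r)$ is non-degenerate and involutive one has
\[
r(x,y)=\bigl(\sigma_x(y),\,\sigma^{-1}_{\sigma_x(y)}(x)\bigr),
\]
equivalently $\gamma_y(x)=\sigma^{-1}_{\sigma_x(y)}(x)$ for all $x,y\in X$, which is just a restatement of \eqref{eqn-inv1}.

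First I would fix $i,j\in X$ and evaluate $r$ at the pair $(i,\sigma_i^{-1}(j))$; that is, set $x=i$ and $y=\sigma_i^{-1}(j)$ in the formula above. The first coordinate is $\sigma_x(y)=\sigma_i(\sigma_i^{-1}(j))=j$, using only that $\sigma_i$ is a bijection (non-degeneracy). Then the second coordinate, by the formula, equals $\sigma^{-1}_{\sigma_x(y)}(x)$; since we have just computed $\sigma_x(y)=j$ and since $x=i$, this is $\sigma_j^{-1}(i)$. Hence $r(i,\sigma_i^{-1}(j))=(j,\sigma_j^{-1}(i))$, which is exactly \eqref{eqn-i-sigma-1}.

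There is no genuine obstacle here: the only point requiring care is the index bookkeeping forced by the abbreviated notation $r(i,j)=(\sigma_i(j),\gamma_j(i))$ — one must remember that the subscript of $\sigma$ in the first coordinate is the left input, while the subscript of the $\sigma^{-1}$ replacing $\gamma$ in the second coordinate is the \emph{output} of the first coordinate, not the original right input. Alternatively, one could avoid the closed form for $\gamma$ and argue straight from \eqref{eqn-inv1}: writing $a=\sigma_i^{-1}(j)$, equation \eqref{eqn-inv1} with $(x,y)=(i,a)$ reads $\sigma_{\sigma_i(a)}\gamma_a(i)=i$, i.e. $\sigma_j\bigl(\gamma_a(i)\bigr)=i$, whence $\gamma_a(i)=\sigma_j^{-1}(i)$; combined with $\sigma_i(a)=j$ this again gives the claim. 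Either route is a two-line computation, which is why the paper flags the lemma as trivial.
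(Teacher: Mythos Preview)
Your argument is correct and essentially matches the paper's proof: the paper computes the first coordinate as $\sigma_i\sigma_i^{-1}(j)=j$ and then applies \eqref{eqn-inv1} with $x=i$, $y=\sigma_i^{-1}(j)$ to get $\gamma_{\sigma_i^{-1}(j)}(i)=\sigma_j^{-1}(i)$, which is exactly your ``alternative'' route. Your primary route via the closed form $r(x,y)=(\sigma_x(y),\sigma^{-1}_{\sigma_x(y)}(x))$ is just the same computation with \eqref{eqn-inv1} pre-packaged, as you yourself note.
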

\begin{proof}
	From the definition of $r$,  $r(x_i,x_{\sigma_i^{-1}(j)})=(x_{\sigma_i\sigma_i^{-1}(j)}\,, \,x_{\gamma_{\sigma_i^{-1}(j)}(i)})=(x_j,\, x_{\gamma_{\sigma_i^{-1}(j)}(i)})$. By replacing $x$ by  $i$ and $y$ by  $\sigma_i^{-1}(j)$  in Equation (\ref{eqn-inv1}), we have  $\gamma_{\sigma_i^{-1}(j)}(i)=\sigma_j^{-1}(i)$, that is 	$r(x_i,\,x_{\sigma_i^{-1}(j)})\,=\,(x_j,\,x_{\sigma_j^{-1}(i)})$.
\end{proof}

\begin{defn}\label{defn-new-solution}
Let $(X,r)$ and $(Y,s)$  be non-degenerate and involutive  st-YBE,  where $X =\{x_1,..,x_n\}$, $Y=\{y_1,..,y_{n'}\}$,   $r(x_i,x_j)=(x_{\sigma_{i}(j)}\,,\,x_{\gamma_{j}(i)})$, $x_i,x_j \in X$ and $s(y_i,y_j)=(y_{\alpha_{i}(j)}\,,\,y_{\beta_{j}(i)})$, $y_i,y_j \in Y$. 
Let  $Z$ denote the set  $ X \times Y$. Let $t: Z^2 \rightarrow Z^2$ be  the following  induced map:
	\begin{gather}
	t: Z^2 \rightarrow Z^2\\
t(\,(x_i,y_k)\,,\,(x_j,y_l)\,)\,=\,(\;(x_{\sigma_i(j)},y_{\alpha_k(l)})\,,\,(x_{\gamma_j(i)},y_{\beta_l(k)})\;) \label{eqn-defn-new-r}
	\end{gather}
	The pair $(Z,t)$ is a st-YBE called \emph{the direct product of $(X,r)$ and $(Y,s)$}. 
\end{defn}
To make the paper self-contained, we prove in Lemma \ref{lem-new--invol+braided} that   $(Z,t)$ is indeed a st-YBE and that it  inherits the properties  of non-degeneracy and involutivity from  $(X,r)$ and $(Y,s)$.
 To shorten notation, we identify $Z$ with another set of elements  
 in bijection with $X \times Y$, and we  write:
 	\begin{gather}
 	Z=\, \{T_{i}^{k}\mid 1 \leq i \leq n,\, 1 \leq k \leq n'\} \label{defn-Z}\\
  t(T_{i}^{k},T_{j}^{l})\,=(g_i^k(T_{j}^{l})\,,\,f_j^l(T_{i}^{k}))\label{eqn-defn-t}
 \end{gather}
   where   $g_i^k$ and $f_j^l$, $1 \leq i,j \leq n$, $1 \leq k,l \leq n'$ are defined by:
	\begin{gather}
	g_i^k\,,\, f_j^l\,: Z\rightarrow Z\nonumber\\
	g_i^k(T_{j}^{l})\,=\, T_{\sigma_i(j)}^{\alpha_k(l)} \label{eqn-defn-new-g}\\
	f_j^l(T_{i}^{k})\,=\,T_{\gamma_j(i)}^{\beta_l(k)} \label{eqn-defn-new-f}
	\end{gather}
That is, in other words, $g_i^k(x_j,y_l)\,=\,(x_{\sigma_i(j)}\,,\, y_{\alpha_k(l)})$
 and $ f_j^l(x_i,y_k)\,=\,(x_{\gamma_j(i)}, y_{\beta_l(k)})$, or  $g_i^k\,=\, \sigma_i \times \alpha_k$ and $f_j^l\,=\,\gamma_j \times  \beta_l$.

 \begin{lem}\label{lem-new--invol+braided}
 Let $(X,r)$ and $(Y,s)$  be non-degenerate and involutive  st-YBE,  where $X =\{x_1,..,x_n\}$, $Y=\{y_1,..,y_{n'}\}$,   $r(x_i,x_j)=(\sigma_{i}(j),\gamma_{j}(i))$, $x_i,x_j \in X$ and $s(y_i,y_j)=(\alpha_{i}(j),\beta_{j}(i))$, $y_i,y_j \in Y$. 
 Let  $(Z,t)$ be their direct product, as defined 
  in Definition \ref{defn-new-solution}. Then 
 	\begin{enumerate}[(i)]
 	\item  $(Z,t)$ is non-degenerate, that is $g_i^k\,,\, f_j^l$, $1 \leq i,j \leq n\,,\,1 \leq k,l \leq n'$, are bijective.
 			\item   $(Z,t)$  is  involutive, that is  $t^2=\,Id_{Z^2}$.
 	\item    $(Z,t)$  is  braided, that is,  $t^{12}t^{23}t^{12}\,=\,t^{23}t^{12}t^{23}$. 	That is, for every  $1 \leq i,j,s \leq n$,  $1 \leq k,l,m \leq n'$ the following  equations hold:\\
 	 \begin{gather}
 	g_i^kg_j^l\,=\, g_{\sigma_i(j)}^{\alpha_k(l)}\,g_{\gamma_j(i)}^{\beta_l(k)} \label{eqn-braided-g}\\
 		f_j^lf_i^k\,=\, f_{\gamma_j(i)}^{\beta_l(k)} \,f_{\sigma_i(j)}^{\alpha_k(l)} \label{eqn-braided-f}\\
 			f_{\sigma_{\gamma_j(i)}(s)}^{\alpha_{\beta_l(k)}(m)}\,g_i^k(T_j^l)\;=\;
 		g_{\gamma_{\sigma_j(s)}(i)}^{\beta_{\alpha_l(m)}(k)}\,\,f_s^m(T_j^l)
 		\label{eqn-braided-new}
 	\end{gather} 
 	\end{enumerate}
 \end{lem}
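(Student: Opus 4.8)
The plan is to exploit the product structure of $(Z,t)$ that is already recorded right after Definition~\ref{defn-new-solution}: under the identification $Z=X\times Y$ one has $g_i^k=\sigma_i\times\alpha_k$ and $f_j^l=\gamma_j\times\beta_l$. Equivalently, introduce the ``shuffle'' bijection $\phi:Z^2\to X^2\times Y^2$ given by $\phi\bigl((x_i,y_k),(x_j,y_l)\bigr)=\bigl((x_i,x_j),(y_k,y_l)\bigr)$; comparing with (\ref{eqn-defn-new-r}) one checks directly that $t=\phi^{-1}\circ(r\times s)\circ\phi$, where $r\times s$ denotes the map of $X^2\times Y^2$ acting as $r$ on the first factor and $s$ on the second. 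Every clause of the lemma will then be obtained from the corresponding clause for $r$ and for $s$ by separating the $X$-coordinate from the $Y$-coordinate.

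For (i): since $(X,r)$ and $(Y,s)$ are involutive, $r$ and $s$ are in particular bijections of $X^2$ and $Y^2$, hence $r\times s$ is a bijection of $X^2\times Y^2$ and $t=\phi^{-1}(r\times s)\phi$ is a bijection of $Z^2$. For (ii): $g_i^k=\sigma_i\times\alpha_k$ is a bijection of $X\times Y$ because $\sigma_i$ and $\alpha_k$ are bijections of $X$ and $Y$ by non-degeneracy of $(X,r)$ and $(Y,s)$; the same applies to $f_j^l=\gamma_j\times\beta_l$. For (iii):
\[
t^2=\phi^{-1}(r\times s)^2\phi=\phi^{-1}(r^2\times s^2)\phi=\phi^{-1}(Id_{X^2}\times Id_{Y^2})\phi=Id_{Z^2},
\]
using involutivity of $(X,r)$ and $(Y,s)$; alternatively one verifies the scalar identities (\ref{eqn-inv1})--(\ref{eqn-inv2}) for $(Z,t)$ by applying Lemma~\ref{lem-formules-invol+braided}(i) to $(X,r)$ on the subscripts and to $(Y,s)$ on the superscripts.

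For (iv) I would first prove the three displayed identities (\ref{eqn-braided-g}), (\ref{eqn-braided-f}), (\ref{eqn-braided-new}) by a componentwise computation and then invoke Lemma~\ref{lem-formules-invol+braided}(ii), applied to $(Z,t)$, to conclude that $(Z,t)$ is braided. For (\ref{eqn-braided-g}) compose the product maps,
\[
g_i^k g_j^l=(\sigma_i\times\alpha_k)(\sigma_j\times\alpha_l)=(\sigma_i\sigma_j)\times(\alpha_k\alpha_l),
\]
rewrite $\sigma_i\sigma_j$ with (\ref{eqn-braided-sigma}) for $r$ and $\alpha_k\alpha_l$ with (\ref{eqn-braided-sigma}) for $s$, and regroup the two Cartesian factors to get $g_{\sigma_i(j)}^{\alpha_k(l)}g_{\gamma_j(i)}^{\beta_l(k)}$; identity (\ref{eqn-braided-f}) is identical with (\ref{eqn-braided-gamma}) in place of (\ref{eqn-braided-sigma}); for (\ref{eqn-braided-new}) evaluate both sides on $T_j^l$ and check that the resulting $X$-subscript is exactly (\ref{eqn-braided-old}) for $r$ with $(x,y,z)=(i,j,s)$, while the resulting $Y$-superscript is exactly (\ref{eqn-braided-old}) for $s$. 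As a sanity check, or as an alternative to invoking Lemma~\ref{lem-formules-invol+braided}(ii), the braid relation $t^{12}t^{23}t^{12}=t^{23}t^{12}t^{23}$ on $Z^3$ follows by conjugating with the triple shuffle $\phi_3:Z^3\to X^3\times Y^3$, under which $t^{12}$ and $t^{23}$ become $r^{12}\times s^{12}$ and $r^{23}\times s^{23}$, so that the relation on $Z^3$ is the ``product'' of the braid relations for $r$ and for $s$.

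All of this is routine once the product structure is isolated; the only place demanding care is (\ref{eqn-braided-new}), where several nested substitutions must be tracked and one must be sure the $X$- and $Y$-indices genuinely decouple — in particular that an index occurring as a subscript of a $\sigma$ or $\gamma$ is built only from $\sigma,\gamma$, and symmetrically that an index occurring as a subscript of an $\alpha$ or $\beta$ is built only from $\alpha,\beta$. Keeping the maps in the form $g_i^k=\sigma_i\times\alpha_k$, $f_j^l=\gamma_j\times\beta_l$ and expanding into coordinates only at the very last step keeps this bookkeeping under control.
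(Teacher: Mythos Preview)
Your proposal is correct and follows essentially the same approach as the paper: both reduce every clause to the corresponding property of $(X,r)$ and $(Y,s)$ via the product decomposition $g_i^k=\sigma_i\times\alpha_k$, $f_j^l=\gamma_j\times\beta_l$, and both verify the braid identities (\ref{eqn-braided-g})--(\ref{eqn-braided-new}) componentwise using Lemma~\ref{lem-formules-invol+braided}. Your explicit introduction of the shuffle bijection $\phi$ (and its triple version $\phi_3$) is a cleaner packaging of the same reduction---the paper carries out the same verifications element by element without naming $\phi$---but the mathematical content is identical.
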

\begin{proof}
	$(i)$  follows from the fact that  	$g_i^k(T_{j}^{l})\,=\, T_{\sigma_i(j)}^{\alpha_k(l)}$ and 
	$f_j^l(T_{i}^{k})\,=\,T_{\gamma_j(i)}^{\beta_l(k)}$, and  the functions   $\sigma_i$, $\alpha_k$ $\gamma_j$ and $\beta_l$ are bijective.\\
	$(ii)$ $t^2(T_{i}^{k},T_{j}^{l})\,= t(T_{\sigma_i(j)}^{\alpha_k(l)}\,,\,T_{\gamma_j(i)}^{\beta_l(k)}) \,=\, (T_{\sigma_{\sigma_i(j)}\gamma_j(i)}^{\alpha_{\alpha_k(l)}\beta_l(k)}\,,\, 
	T_{\gamma_{\gamma_j(i)}\sigma_i(j)}^{\beta_{\beta_l(k)}\alpha_k(l)})$. As $(X,r)$ and $(Y,s)$   are  involutive,  this is equal to $(T_{i}^{k},T_{j}^{l})$,  from Equations (\ref{eqn-inv1}), (\ref{eqn-inv2}). So,     $t^2=\,Id_{Z^2}$. 			\\
	$(iii)$  From the definition of $t$, 	$t^{12}t^{23}t^{12}\,=\,t^{23}t^{12}t^{23}$  if and only if  Equations (\ref{eqn-braided-g})-(\ref{eqn-braided-new}) hold. We prove (\ref{eqn-braided-g}). From  Equation (\ref{eqn-defn-new-g}), we have:
		\begin{align*}
	g_i^kg_j^l(T_s^m)\,=\,g_i^k(T_{\sigma_j(s)}^{\alpha_l(m)})=\,
	T_{\sigma_i\sigma_j(s)}^{\alpha_k\alpha_l(m)}\\
	g_{\sigma_i(j)}^{\alpha_k(l)}\,g_{\gamma_j(i)}^{\beta_l(k)}(T_s^m)=
	g_{\sigma_i(j)}^{\alpha_k(l)}(T_{\sigma_{\gamma_j(i)}(s)}^{\alpha_{\beta_l(k)}(m)})\,=\,T_{\sigma_{\sigma_i(j)}\sigma_{\gamma_j(i)}(s)}^{\alpha_{\alpha_k(l)}\alpha_{\beta_l(k)}(m)}
		\end{align*}
	From Equation (\ref{eqn-braided-sigma}), 
	$T_{\sigma_i\sigma_j(s)}^{\alpha_k\alpha_l(m)}\,=\,T_{\sigma_{\sigma_i(j)}\sigma_{\gamma_j(i)}(s)}^{\alpha_{\alpha_k(l)}\alpha_{\beta_l(k)}(m)}$,  for every $1 \leq s \leq n$, $1 \leq m \leq n'$,  so (\ref{eqn-braided-g}) holds. In the same way, we show (\ref{eqn-braided-f}) holds, using Equations   (\ref{eqn-defn-new-f}) and (\ref{eqn-braided-gamma}), and  (\ref{eqn-braided-new})
	using Equation (\ref{eqn-braided-old}).
\end{proof}
 \begin{ex}\label{ex-direct-product-sol-2}
	Let $(X,r)$  and  $(Y,s)$ be the solutions corresponding to $c$ and $d$ respectively in   Example   \ref{ex-sol-2}. Then, their direct product $(Z,t)$ is defined by $g_i^k=f_j^l=Id_{\{1,2\}}\times (1,2)$, for every $1 \leq  i,j,k,l\leq 2$, that is  $g_i^k=f_j^l$ are equal to the permutation $(T_1^1,T_1^2)(T_2^1,T_2^2)$. We have 
	$t(T_1^1,T_1^1)=(T_1^2,T_1^2)\,,\,
t(T_1^1,T_2^1)=(T_2^2,T_1^2)\,,\,
t(T_1^1,T_2^2)=(T_2^1,T_1^2)\,,\,
t(T_1^2,T_2^1)=(T_2^2,T_1^1)\,,\,
t(T_1^2,T_2^2)=(T_2^1,T_1^1)\,,\,
t(T_2^1,T_2^1)=(T_2^2,T_2^2)$.
\end{ex}

%%%%%%%%%%%%%%%%%%%%%%%%%%%%%%%%%%%%%%%%%%%%%%%%%%%
\section{Proof of Theorem \ref{thmA} }
%%%%%%%%%%%%%%%%%%%%%%%%%%%%%%%%%%%%%%%%%%%%%%%%%%
%%%%%%%%%%%%%%%%%%%%%%%%%%%%%%%%%%%%%%%%%%%%%%%%%%
%%%%%%%%%%%%%%%%%%%%%%%%%%%%%%%%%%%%%%%%%%%%%%%%%%
In order to prove  Theorem \ref{thmA},  we describe in the following lemmas the blocks in the matrices  $c \boxtimes d$ and $e$, where $e$ is the representing matrix of the direct product of the  underlying  set-theoretic solutions of $c$ and $d$, and prove they are equal. Before we proceed with the technical computations, we present  the idea  of the proof of  the equality of the matrices  $e$ and $c \boxtimes d$.  First, we show  that in $e$ and also in $c \boxtimes d$, at each row, at each column and at each block, there is a unique non-zero entry $1$.  Next, we show that  for each block in   $e$ and also in $c \boxtimes d$,  the non-zero entry occurs at the same position in the block.  
\subsection{Properties of the  matrix $e$}
We first describe the properties of the representing matrix of an operator induced from a  st-YBE. Given a matrix $A$, we say that the scalar $a_{ij}$ is \emph{the entry at position $(i,j)$}.
\begin{lem}\label{lem-propblock--c}
		Let $(X,r)$  be  a  non-degenerate and involutive st-YBE, with $\mid X\mid=n$. Let $c$ be  the  linearisation of  $(X,r)$.  Assume that $c$  is partitioned into $n^2$ square blocks $B_{ij}$ of order $n$.  Then
		\begin{enumerate}[(i)]
			\item  At each row  and each column of $c$, there is a unique non-zero entry $1$.
			\item  At each block of $c$, there is a unique non-zero entry $1$.
			\item Let  $1 \leq i,j \leq n$. In the block $B_{ij}$: 
			\begin{itemize}
				\item we read the images of all the vectors $e_j\otimes e_{k}$, $1 \leq k \leq n$,  relative to the subset of basis elements $\{e_i\otimes e_{l}\mid 1 \leq l \leq n\}$ .
				\item the non-zero  entry occurs at position $(\sigma_i^{-1}(j)\,,\,\sigma_j^{-1}(i))$
			\end{itemize} 
	
		\end{enumerate}
\end{lem}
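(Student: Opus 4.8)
The plan is to exploit the fact that the operator $c$ simply permutes the tensor basis of $V\otimes V$, and then to identify precisely which basis vector each $e_j\otimes e_k$ is mapped to, with the help of the involutivity relation \eqref{eqn-inv1} (equivalently, Lemma \ref{lem-i-sigma-1}). Recall that, under the identification of $X$ with $\{e_i\}$, the operator $c$ induced by $r$ satisfies $c(e_i\otimes e_j)=e_{\sigma_i(j)}\otimes e_{\gamma_j(i)}$, where $r(x_i,x_j)=(x_{\sigma_i(j)},x_{\gamma_j(i)})$. For part $(i)$: since $(X,r)$ is involutive, $r=r^{-1}$ is a bijection of $X\times X$ onto itself, so $c$ carries the ordered basis $\{e_i\otimes e_j\mid 1\leq i,j\leq n\}$ onto itself; that is, $c$ is a permutation matrix. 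Hence each row and each column of $c$ has exactly one non-zero entry, and that entry equals $1$.

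For $(ii)$ and the first bullet of $(iii)$ I would fix a block $B_{ij}$. Because the basis is ordered lexicographically, block-row $i$ consists of the rows labelled $e_i\otimes e_1,\dots,e_i\otimes e_n$ and block-column $j$ consists of the columns labelled $e_j\otimes e_1,\dots,e_j\otimes e_n$; thus the $(l,k)$ entry of $B_{ij}$ is the coefficient of $e_i\otimes e_l$ in $c(e_j\otimes e_k)$, which is exactly the first bullet of $(iii)$. Since $c(e_j\otimes e_k)=e_{\sigma_j(k)}\otimes e_{\gamma_k(j)}$, this image belongs to block-row $i$ precisely when $\sigma_j(k)=i$; by non-degeneracy $\sigma_j$ is bijective, so there is a unique such column index $k=\sigma_j^{-1}(i)$, and the associated entry is $1$. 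Therefore $B_{ij}$ contains a single non-zero entry, which settles $(ii)$.

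For the second bullet of $(iii)$: with $k=\sigma_j^{-1}(i)$ the row index of that entry inside $B_{ij}$ is $l=\gamma_k(j)=\gamma_{\sigma_j^{-1}(i)}(j)$. Applying Lemma \ref{lem-i-sigma-1} with $(j,i)$ in place of $(i,j)$ — equivalently, \eqref{eqn-inv1} with $x=j$ and $y=\sigma_j^{-1}(i)$, so that $\sigma_x(y)=i$ — gives $\sigma_i(\gamma_{\sigma_j^{-1}(i)}(j))=j$, hence $l=\sigma_i^{-1}(j)$. Thus the unique non-zero entry of $B_{ij}$ is the $1$ located at position $(\sigma_i^{-1}(j),\,\sigma_j^{-1}(i))$.

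I do not expect a real obstacle: the statement is essentially a reformulation of how the permutation $r$ reads off in block form. The only point needing care is the bookkeeping — inside $B_{ij}$ the columns carry the index $j$ while the rows carry the index $i$, and the involutivity identity must therefore be invoked with the indices in the right order. As a sanity check one can verify the position formula against the two set-theoretic solutions of size $2$ in Example \ref{ex-sol-2}, where $\sigma_i\equiv\mathrm{id}$, respectively $\sigma_i\equiv(1\,2)$.
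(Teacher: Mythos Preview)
Your proof is correct and follows essentially the same approach as the paper: identify $c$ as a permutation matrix, read off the block structure from the lexicographic ordering, and locate the unique non-zero entry of $B_{ij}$ via Lemma~\ref{lem-i-sigma-1} (equivalently \eqref{eqn-inv1}). The only minor variation is in part~$(ii)$: the paper argues by a counting observation ($n^2$ non-zero entries distributed over $n^2$ blocks), whereas you directly exhibit the unique column $k=\sigma_j^{-1}(i)$ in each block via the bijectivity of $\sigma_j$; your argument is in fact the more transparent of the two, but the underlying idea is the same.
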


\begin{proof}
	$(i)$,  $(ii)$  $c$ is a permutation matrix  of order $n^2$,  so  there is a unique entry $1$  at each row  and each column of $c$ and there are   $n^2$ non-zero entries. Moreover, as there are  $n^2$ square blocks of order $n$, there is a unique non-zero entry at each block, since otherwise there would be a column and a row with more than one non-zero entry.\\
	$(iii)$  Let $c:V\otimes V\rightarrow V\otimes V$, with    respect to the basis   $\{e_i \otimes e_j \,\mid \,1\leq i,j \leq  n \}$ of $V\otimes V$   ordered lexicographically.  The first $n$ columns describe the images of  $\{e_1\otimes e_{k}\mid 1 \leq k \leq n\}$, the next  $n$ columns describe the images of  $\{e_2\otimes e_{k}\mid 1 \leq k \leq n\}$ and so on, the  last $n$ columns describe the images of  $\{e_n\otimes e_{k}\mid 1 \leq k \leq n\}$.   In the   first $n$ rows we read the images of  all the basis elements relative to the subset $\{e_1\otimes e_{l}\mid 1 \leq l \leq n\}$ and so on.  Let  $1 \leq i,j \leq n$. From the above, in the block $B_{ij}$, 	we read the images of all the vectors $e_j\otimes e_{k}$, $1 \leq k \leq n$,  relative to the subset of basis elements $\{e_i\otimes e_{l}\mid 1 \leq l \leq n\}$. From Lemma \ref{lem-i-sigma-1}, 
	$c(e_j\otimes e_{\sigma_j^{-1}(i)})\,=\,e_i\otimes e_{\sigma_i^{-1}(j)}$, that is, in the block $B_{ij}$	there is a non-zero entry at position $(\sigma_i^{-1}(j)\,,\,\sigma_j^{-1}(i))$  and from $(ii)$ this is the unique one.
	
\end{proof}
We now turn to the description of  the representing matrix of the direct product of  two  st-YBE. In the following, we use the following notations:
\begin{gather*}
\hat{i}\,=\,\lceil \frac{i}{m}\rceil,\; \textrm{where}\; \lceil ..\rceil\; \textrm{denotes the ceil function}\\
\bar{i}= i (mod \,m),\; \textrm{the residue modulo}\, m\, \textrm{of} \,i \\
\end{gather*}
Whenever the residue is $0$ it is replaced by $m$.
\begin{lem}\label{lem-prop-e}
	Let $(X,r)$ and $(Y,s)$  be  non-degenerate and involutive st-YBE, with $\mid X\mid=n$, $\mid Y\mid=m$. 	Let  $(Z,t)$ be their direct product, as defined in Definition \ref{defn-new-solution}, with linearisation  $e:W\otimes W\rightarrow W \otimes W$.   Assume that $e$  is partitioned into $(nm)^2$ square blocks $E_{ij}$ of order $nm$. 
	 Let  $1 \leq i\leq n,\, 1 \leq j \leq m$. Then, in the block $E_{ij}$: 
	\begin{itemize}
		\item  we read the images of all the elements $\{T_{\hat{j}}^{\bar{j}}\,\otimes \,T_{p}^{q}\,\mid 1 \leq p  \leq n,1 \leq q \leq m\} $ relative to the subset of basis elements  $\{T_{\hat{i}}^{\bar{i}}\,\otimes \,T_{p}^{q}\,\mid 1 \leq p  \leq n,1 \leq q \leq m\} $ 
	\item there is a unique non-zero entry at position 
	\begin{gather}
(\;(\sigma^{-1}_{\hat{i}}(\hat{j})-1)m \,+\,\alpha_{\bar{i}}^{-1}(\bar{j})\;,\; 
(\sigma^{-1}_{\hat{j}}(\hat{i})-1)m \,+\,\alpha_{\bar{j}}^{-1}(\bar{i})\;)
	\end{gather}
		\end{itemize}
\end{lem}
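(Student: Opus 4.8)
The plan is to track how the lexicographic ordering of the basis $Z\otimes Z$ interacts with the identification $Z = X\times Y$, and then apply Lemma \ref{lem-propblock--c} to the solution $(Z,t)$ directly. The key observation is that an index $\ell$ running from $1$ to $nm$ which labels an element $T_{\hat\ell}^{\bar\ell}$ of $Z$ is exactly the lexicographic position of the pair $(\hat\ell,\bar\ell)\in\{1,\dots,n\}\times\{1,\dots,m\}$: writing $\ell = (a-1)m + b$ with $1\le a\le n$, $1\le b\le m$ recovers $\hat\ell = a = \lceil \ell/m\rceil$ and $\bar\ell = b = \ell \pmod m$ (with $0$ replaced by $m$). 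So the ordered basis of $W$ is $T_1^1, T_1^2,\dots,T_1^m, T_2^1,\dots,T_n^m$, and correspondingly the ordered basis of $W\otimes W$ groups the $(nm)^2$ vectors $T_{\hat i}^{\bar i}\otimes T_p^q$ into $nm$ consecutive blocks of $nm$ vectors each, the $i$-th group being $\{T_{\hat i}^{\bar i}\otimes T_p^q \mid 1\le p\le n,\ 1\le q\le m\}$ ordered lexicographically in $(p,q)$.

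First I would invoke Lemma \ref{lem-new--invol+braided} to record that $(Z,t)$ is a non-degenerate involutive set-theoretic solution, so Lemma \ref{lem-propblock--c} applies with $X$ replaced by $Z$ and $n$ replaced by $N := nm$. That lemma, read for $(Z,t)$, says: partitioning $e$ into $N^2$ square blocks $E_{IJ}$ of order $N$ indexed by $I,J\in\{1,\dots,N\}$, the block $E_{IJ}$ contains the images of the vectors $T_I\otimes (\text{anything})$ relative to the subset $\{T_I'\text{-part}\}$ — more precisely, in $E_{IJ}$ we read the images of $\{T_J\otimes T_k \mid 1\le k\le N\}$ relative to $\{T_I\otimes T_l\mid 1\le l\le N\}$, and the unique nonzero entry sits at position $(g_I^{-1}(J),\, g_J^{-1}(I))$, where here $g_I$ denotes the "$\sigma$-map" of the solution $(Z,t)$, i.e.\ $g_I = g_{\hat I}^{\bar I}$ in the notation of \eqref{eqn-defn-new-g}.

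Next I would translate both the block index and the in-block position through the dictionary $I = (\hat I - 1)m + \bar I$. For the block: the index $i$ of the lemma's statement is exactly $I$, so $\hat i = \hat I$, $\bar i = \bar I$, giving the stated description "$E_{ij}$ reads the images of $T_{\hat j}^{\bar j}\otimes T_p^q$ relative to $T_{\hat i}^{\bar i}\otimes T_p^q$". For the position: by \eqref{eqn-defn-new-g}, $g_{\hat I}^{\bar I}(T_a^b) = T_{\sigma_{\hat I}(a)}^{\alpha_{\bar I}(b)}$, which is the element of $Z$ whose $X$-index is $\sigma_{\hat I}(a)$ and $Y$-index is $\alpha_{\bar I}(b)$; hence the inverse map $g_I^{-1}$ sends $T_J = T_{\hat J}^{\bar J}$ to $T_{\sigma_{\hat I}^{-1}(\hat J)}^{\alpha_{\bar I}^{-1}(\bar J)}$, whose lexicographic label is $(\sigma_{\hat I}^{-1}(\hat J) - 1)m + \alpha_{\bar I}^{-1}(\bar J)$. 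Substituting $I = i$, $J = j$ and symmetrically for the second coordinate yields exactly the claimed position
$$\bigl((\sigma^{-1}_{\hat i}(\hat j)-1)m + \alpha_{\bar i}^{-1}(\bar j),\ (\sigma^{-1}_{\hat j}(\hat i)-1)m + \alpha_{\bar j}^{-1}(\bar i)\bigr).$$
Uniqueness of the nonzero entry in each block is inherited verbatim from Lemma \ref{lem-propblock--c}(ii) applied to $(Z,t)$.

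I expect the only real friction to be bookkeeping: making the index bijection $\ell\leftrightarrow(\hat\ell,\bar\ell)$ fully precise (in particular the convention that residue $0$ is read as $m$, which is needed for the boundary case $\bar\ell = m$ to behave), and carefully checking that the lexicographic ordering of $Z$ induced from the product $X\times Y$ is the "$X$-major, $Y$-minor" one so that $\hat\ell$ indeed plays the role of the $X$-coordinate and $\bar\ell$ the $Y$-coordinate. There is no genuine mathematical obstacle once Lemma \ref{lem-propblock--c} is granted for $(Z,t)$; the proof is essentially a change-of-coordinates on indices, and I would present it as such, pausing only to spell out the block grouping and the computation of $g_I^{-1}$.
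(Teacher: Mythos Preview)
Your proposal is correct and follows essentially the same approach as the paper: both invoke Lemma~\ref{lem-new--invol+braided} to know that $(Z,t)$ is non-degenerate involutive, then apply Lemma~\ref{lem-propblock--c} to $(Z,t)$ and unwind the lexicographic index bijection $\ell\leftrightarrow(\hat\ell,\bar\ell)$. The only cosmetic difference is that you read off the in-block position directly from Lemma~\ref{lem-propblock--c}(iii) (computing $(g_i^k)^{-1}$ componentwise), whereas the paper re-describes the column/row grouping explicitly and then appeals once more to Lemma~\ref{lem-i-sigma-1} for the image $e\bigl(T_{\hat j}^{\bar j}\otimes T_{\sigma_{\hat j}^{-1}(\hat i)}^{\alpha_{\bar j}^{-1}(\bar i)}\bigr)$; the content is identical.
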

\begin{proof}
	From Lemma \ref{lem-new--invol+braided},  $(Z,t)$ is a non-degenerate and involutive  st-YBE, so it satisfies  the properties from Lemma  \ref{lem-propblock--c}, that is  there is a unique non-zero entry at each block of $e$.  In the first $nm$ vertical blocks of the form $E_{*1}$, we read the images of 
	all the elements $\{T_{1}^{1}\,\otimes \,T_{p}^{q}\,\mid 1 \leq p  \leq n,1 \leq q \leq m\} $ relative to all the basis elements, in the next $nm$ vertical blocks of the form $E_{*2}$, we read the images of 
	all the elements $\{T_{1}^{2}\,\otimes \,T_{p}^{q}\,\mid 1 \leq p  \leq n,1 \leq q \leq m\} $ relative to all the basis elements and so on. 
	
	In the $nm$ vertical blocks of the form $E_{*\,m+1}$, we read the images of 
	all the elements $\{T_{2}^{1}\,\otimes \,T_{p}^{q}\,\mid 1 \leq p  \leq n,1 \leq q \leq m\} $ relative to all the basis elements,  and 
		in the $nm$ vertical blocks of the form $E_{*\,m+2}$, we read the images of 
	all the elements $\{T_{2}^{2}\,\otimes \,T_{p}^{q}\,\mid 1 \leq p  \leq n,1 \leq q \leq m\} $ relative to all the basis elements,  and so on. 
		In the $nm$ vertical blocks of the form $E_{*\,2m+1}$, we read the images of 
	all the elements $\{T_{3}^{1}\,\otimes \,T_{p}^{q}\,\mid 1 \leq p  \leq n,1 \leq q \leq m\} $ relative to all the basis elements,  and so on.
	Let  $1 \leq i,j \leq n$.  So, from the above,  in the vertical blocks of the form  $E_{*j}$,  we read the images of all the elements $\{T_{\hat{j}}^{\bar{j}}\,\otimes \,T_{p}^{q}\,\mid 1 \leq p  \leq n,1 \leq q \leq m\} $ relative to all the basis elements. Using  the same considerations with the rows instead of the columns,we  find that  in the block $E_{ij}$, we read the images of all the elements $\{T_{\hat{j}}^{\bar{j}}\,\otimes \,T_{p}^{q}\,\mid 1 \leq p  \leq n,1 \leq q \leq m\} $ relative to the subset of basis elements  $\{T_{\hat{i}}^{\bar{i}}\,\otimes \,T_{p}^{q}\,\mid 1 \leq p  \leq n,1 \leq q \leq m\} $.  From Lemma \ref{lem-i-sigma-1},  $e(\; T_{\hat{j}}^{\bar{j}}\,\otimes \,T_{\sigma_{\hat{j}}^{-1}(\hat{i})}^{\alpha_{\bar{j}}^{-1}(\bar{i})}\;)\;=\; 
	T_{\hat{i}}^{\bar{i}}\,\otimes \,T_{\sigma_{\hat{i}}^{-1}(\hat{j
		})}^{\alpha_{\bar{i}}^{-1}(\bar{j})}\;$. So, there is a non-zero entry at the position  	$(\;(\sigma^{-1}_{\hat{i}}(\hat{j})-1)m \,+\,\alpha_{\bar{i}}^{-1}(\bar{j})\;,\; 
	(\sigma^{-1}_{\hat{j}}(\hat{i})-1)m \,+\,\alpha_{\bar{j}}^{-1}(\bar{i})\;)$, and it is the unique one.
\end{proof}

\subsection{Properties of the matrix $c \boxtimes d$}

\begin{lem}\label{lem-prop-tracy}
	Let $(X,r)$ and $(Y,s)$  be  non-degenerate and involutive  st-YBE, with $\mid X\mid=n$, $\mid Y\mid=m$. Let $c:V\otimes V\rightarrow V \otimes V$ and $d:V'\otimes V'\rightarrow V' \otimes V'$ be  the  the linearisations of $r$ and $s$ respectively.  Assume that $c$  is partitioned into $n^2$ square blocks $B_{ij}$ of order $n$ and $d$  is partitioned  into $m^2$ square blocks $B'_{ij}$ of order $m$.  Let $c \boxtimes d$ denote the Tracy-Singh product of $c$ and $d$ with these partitions.  Assume that $c \boxtimes d$  is partitioned into $(nm)^2$  square blocks  of order $mn$, denoted by $\mathcal{B}_{ij}$. 
		 Let  $1 \leq i\leq n, 1 \leq j \leq m$. Then:
		 \begin{enumerate}[(i)]
		 	\item  The block $\mathcal{B}_{ij}$ is obtained from   $\mathcal{B}_{ij}\,=\, B_{\hat{i}\,\hat{j}}\,\otimes B'_{\bar{i}\,\bar{j}}$.
	\item 	 In the block $\mathcal{B}_{ij}$,  there is a unique non-zero entry at position: 
		\begin{gather}
		(\;(\sigma^{-1}_{\hat{i}}(\hat{j})-1)m \,+\,\alpha_{\bar{i}}^{-1}(\bar{j})\;,\; 
		(\sigma^{-1}_{\hat{j}}(\hat{i})-1)m \,+\,\alpha_{\bar{j}}^{-1}(\bar{i})\;)
		\end{gather}
		 \end{enumerate}
\end{lem}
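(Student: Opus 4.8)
\emph{Proof plan.} The statement has two parts, and neither needs anything beyond the definition of the Tracy--Singh product and Lemma~\ref{lem-propblock--c}; the work is entirely in the bookkeeping of indices.

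For part $(i)$, the plan is to unfold the definition $c\boxtimes d=((B_{ab}\otimes B'_{kl})_{kl})_{ab}$. Here the outer grid is $n\times n$, indexed by the block positions $(a,b)$ of $c$ with $1\le a,b\le n$, and each outer cell is an $m\times m$ grid, indexed by the block positions $(k,l)$ of $d$ with $1\le k,l\le m$, the $(k,l)$-cell of the $(a,b)$-cell being $B_{ab}\otimes B'_{kl}$, a square block of order $nm$. Hence partitioning $c\boxtimes d$ into $(nm)^2$ square blocks of order $nm$ reproduces precisely these blocks $B_{ab}\otimes B'_{kl}$, and it only remains to translate indices. The $k$-th inner block row sitting inside the $a$-th outer block row is the $\bigl((a-1)m+k\bigr)$-th block row of the refined partition; so if $i=(a-1)m+k$ with $1\le k\le m$, then $a=\lceil i/m\rceil=\hat{i}$ and $k=\bar{i}$, with the convention $0\mapsto m$, and likewise $j=(b-1)m+l$ gives $b=\hat{j}$ and $l=\bar{j}$. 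Therefore $\mathcal{B}_{ij}=B_{\hat{i}\hat{j}}\otimes B'_{\bar{i}\bar{j}}$.

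For part $(ii)$, the plan is to combine $(i)$ with Lemma~\ref{lem-propblock--c}$(iii)$ and the entrywise formula for a Kronecker product. Applying Lemma~\ref{lem-propblock--c}$(iii)$ to $c$, which is induced from $(X,r)$, the block $B_{\hat{i}\hat{j}}$ has a single non-zero entry, equal to $1$, at position $(\sigma^{-1}_{\hat{i}}(\hat{j}),\,\sigma^{-1}_{\hat{j}}(\hat{i}))$; applying the same lemma to $d$, which is induced from $(Y,s)$ with associated maps $\alpha_{\ast},\beta_{\ast}$, the block $B'_{\bar{i}\bar{j}}$ has a single non-zero entry, equal to $1$, at position $(\alpha^{-1}_{\bar{i}}(\bar{j}),\,\alpha^{-1}_{\bar{j}}(\bar{i}))$. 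For a square matrix $P$ of order $n$ and a square matrix $Q$ of order $m$, the entry of $P\otimes Q$ in row $(p-1)m+p'$ and column $(q-1)m+q'$ equals $P_{pq}\,Q_{p'q'}$; so if $P$ has its unique non-zero entry $1$ at $(p_0,q_0)$ and $Q$ has its unique non-zero entry $1$ at $(p_0',q_0')$, then $P\otimes Q$ has its unique non-zero entry $1$ at $\bigl((p_0-1)m+p_0',\,(q_0-1)m+q_0'\bigr)$. Taking $P=B_{\hat{i}\hat{j}}$ and $Q=B'_{\bar{i}\bar{j}}$ yields the claimed position $\bigl((\sigma^{-1}_{\hat{i}}(\hat{j})-1)m+\alpha^{-1}_{\bar{i}}(\bar{j}),\,(\sigma^{-1}_{\hat{j}}(\hat{i})-1)m+\alpha^{-1}_{\bar{j}}(\bar{i})\bigr)$, and its uniqueness in $\mathcal{B}_{ij}$ is immediate since $P\otimes Q$ carries only that one non-zero entry.

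There is no conceptual obstacle: once $(i)$ identifies the refined block with a Kronecker product of two blocks each carrying a single $1$, part $(ii)$ is forced. The one thing that demands care is the index translation between the single running index $i\in\{1,\dots,nm\}$ of the refined partition and the pair $(\hat{i},\bar{i})$, and matching the ``$-1$'' shifts in the Kronecker row/column formula against the ceiling and residue conventions fixed just before the lemma; with those pinned down, both parts follow directly.
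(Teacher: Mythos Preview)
Your proposal is correct and follows essentially the same route as the paper: part $(i)$ is a direct unfolding of the Tracy--Singh definition with the index translation $i=(a-1)m+k\leftrightarrow(\hat{i},\bar{i})$, and part $(ii)$ combines Lemma~\ref{lem-propblock--c}$(iii)$ with the Kronecker-product entry formula. The paper phrases $(ii)$ slightly more geometrically (as a non-zero subblock inside $\mathcal{B}_{ij}$ and then a non-zero entry inside that subblock), but the content is the same.
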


\begin{proof}
	$(i)$ results from the definition of  the Tracy-Singh product of $c$ and $d$ with these partitions, where at  each row of $d$ there are $m$ blocks. We refer to Equation (\ref{eqn-tracy-blocks}) for an illustration.\\
$(ii)$ 	From Lemma \ref{lem-propblock--c}$(iii)$, there is a  unique non-zero entry in the block $B_{\hat{i}\,\hat{j}}$ in $c$ 
at position  $(\;\sigma^{-1}_{\hat{i}}(\hat{j})\,,\,\sigma^{-1}_{\hat{j}}(\hat{i})\;)$ and  there is a  unique non-zero entry in the block $B'_{\bar{i}\,\bar{j}}$ in $d$  at position  $(\;\alpha^{-1}_{\bar{i}}(\bar{j})\,,\,\alpha^{-1}_{\bar{j}}(\bar{i})\;)$. In Figure \ref{fig-comput-2-blocks}, we illustrate   the computation of $\mathcal{B}_{ij}$:
\begin{figure}[H]
	$\mathcal{B}_{ij}=B_{\hat{i}\,\hat{j}}\,\otimes B'_{\bar{i}\,\bar{j}}=\begin{pmatrix}
	0 & 0 &0 & ...&	1\\
	0 &0 & 0 &...&0 \\
	.. &.. & .. &...&.. \\
	0 &0 & 0 &...&0 \\
	\end{pmatrix} \otimes \begin{pmatrix}
0 & 0 &0 & ...&	0\\
0 &0 & 0 &...&0 \\
.. &.. & .. &...&.. \\
1 &0 & 0 &...&0 \\
	\end{pmatrix}=	 \begin{pmatrix}
		\mat{0}_{m} & 	\mat{0}_{m} &	\mat{0}_{m} & ...&	B'_{\bar{i}\,\bar{j}}\\
		\mat{0}_{m} &	\mat{0}_{m} & 	\mat{0}_{m} &...&	\mat{0}_{m} \\
	.. &.. & .. &...&.. \\
		\mat{0}_{m} &	\mat{0}_{m} & 	\mat{0}_{m} &...&	\mat{0}_{m} \\
	\end{pmatrix}$.
\caption{$\mathcal{B}_{ij}=B_{\hat{i}\,\hat{j}}\,\otimes B'_{\bar{i}\,\bar{j}}$, where $B_{\hat{i}\,\hat{j}}$ and $ B'_{\bar{i}\,\bar{j}}$   of order $n$ and $m$ resp. }\label{fig-comput-2-blocks}
\end{figure}

From the definition of $\otimes$,   in  the block $\mathcal{B}_{ij}$,  there are $n^2-1$ zero square  sublocks of order $m$ and an unique non-zero square  sublock of order $m$ at position $(\;\sigma^{-1}_{\hat{i}}(\hat{j})\,,\,\sigma^{-1}_{\hat{j}}(\hat{i})\;)$, since there is a unique non-zero entry in $B_{\hat{i}\,\hat{j}}$ at that position.  In this non-zero sublock, there is a unique non-zero entry at position $(\;\alpha^{-1}_{\bar{i}}(\bar{j})\,,\,\alpha^{-1}_{\bar{j}}(\bar{i})\;)$ (in the sublock), since 
there is a unique non-zero entry in $B'_{\bar{i}\,\bar{j}}$   at that position. So, there is a unique non-zero entry  in the block $\mathcal{B}_{ij}$,  and it occurs at position
\begin{gather}
(\;(\sigma^{-1}_{\hat{i}}(\hat{j})-1)m \,+\,\alpha_{\bar{i}}^{-1}(\bar{j})\;,\; 
(\sigma^{-1}_{\hat{j}}(\hat{i})-1)m \,+\,\alpha_{\bar{j}}^{-1}(\bar{i})\;)
\end{gather}
\end{proof}
We reformulate Theorem \ref{thmA} in a more precise way and give its proof.
\begin{thm*} \textbf{1}
	Let $(X,r)$ and $(Y,s)$  be  non-degenerate and involutive  st-YBE, with $\mid X\mid=n$, $\mid Y\mid=m$. Let $c:V\otimes V\rightarrow V \otimes V$ and $d:V'\otimes V'\rightarrow V' \otimes V'$ be  the linearisations of $r$ and $s$  respectively.  Assume that $c$  is partitioned into $n^2$ square blocks $B_{ij}$ of order $n$ and $d$  is partitioned  into $m^2$ square blocks $B'_{ij}$ of order $m$.  	Let $c \boxtimes d$ denote the Tracy-Singh product of $c$ and $d$ with these partitions.   Let  $(Z,t)$ be their direct product, as defined in Definition \ref{defn-new-solution}, with linearisation  $e:W\otimes W\rightarrow W \otimes W$. Then $c \boxtimes d$ is the representing matrix of  the linear automorphism $e$  with respect to the basis $Z\otimes Z$.
\end{thm*}
\begin{proof}
 Assume that $c \boxtimes d$  and $e$ are partitioned into $(nm)^2$ square blocks of order $mn$, denoted by $\mathcal{B}_{ij}$ and $E_{ij}$ respectively, $1 \leq i \leq n$, $1 \leq j \leq m$. 
 Let $1 \leq i \leq n$, $1 \leq j \leq m$.  From Lemmas \ref{lem-prop-tracy} and \ref{lem-prop-e}, there is a unique non-zero entry in $\mathcal{B}_{ij}$ and $E_{ij}$, and it occurs at the same position in the blocks. So, as all the blocks are equal, the matrices $e$ and $c \boxtimes d$ are equal, that is 
 $c \boxtimes d$ is the representing matrix of  the linear automorphism $e$  with respect to the basis $Z\otimes Z$.
\end{proof}

%%%%%%%%%%%%%%
%%%%%%%%%%%%%%%%%%%%%%%%%%%%%%%%%%%%%%%%%%%%%%%%%%%

\bigskip\bigskip\noindent
{ Fabienne Chouraqui}

\smallskip\noindent
University of Haifa at Oranim, Israel.

\smallskip\noindent
E-mail: {\tt fabienne.chouraqui@gmail.com} \\

                {\tt fchoura@sci.haifa.ac.il}
\end{document}